\theoremstyle{plain}
\newtheorem{corollary}{\bf Corollary}
\newtheorem{definition}{\bf Definition}
\newtheorem{lemma}{\bf Lemma}
\newtheorem{remark}{Remark}
\newtheorem{theorem}{\bf Theorem}
\numberwithin{equation}{section}
\newcommand{\R}{\mathbb{R}}
\theoremstyle{definition}
\numberwithin{equation}{section}
\begin{document}

\baselineskip=17pt


\title[Conformal gradient vector fields on  Riemannian manifolds]{Conformal Vector Fields and the De-Rham Laplacian on a Riemannian Manifold with Boundary}

\author[A. Freitas, I. Evangelista]{Antônio Freitas$^1$, Israel Evangelista$^2$}
\address{$^1$Universidade Federal do Ceará- UFC, Fortaleza /CE, Brazil}
\email{antoniofreitas1964@live.com}

\address{$^2$Universidade Federal do Delta do Parna\'iba- UFDPar, Curso de Matem\'atica, Parna\'iba /PI, Brazil}
\email{israelevangelista@ufpi.edu.br}

\author[E. Viana]{Emanuel Viana$^3$}
\address{$^3$Instituto Federal de Educa\c c\~ao, Ci\^encia e Tecnologia do Cear\'a (IFCE), Campus Caucaia, Caucaia /CE, Brazil.}
\email{emanuel.mendonca@ifce.edu.br}
\date{December 21, 2021}

\begin{abstract}
Let $(M^n,\,g)$ be an $n$-dimensional compact connected Riemannian manifold with smooth boundary. In this article, we study the effects of the presence of a nontrivial conformal vector field on $(M^n,\,g)$. We used the well-known de-Rham Laplace operator and a nontrivial solution of the famous Fischer–Marsden differential equation to provide two characterizations of the hemisphere  $\mathbb{S}^n_+(c)$ of constant curvature $c>0$. As a consequence of the charac-terization using the Fischer–Marsden equation, we prove the cosmic no-hair conjecture under a given integral condition.
\end{abstract}

\subjclass[2010]{Primary: 53C20, 53A30}

\keywords{Conformal gradient vector fields, hemisphere of the Euclidean sphere, manifolds with boundary}

\maketitle

\section{Introduction}
Let $(M^n,\,g),$ $n\geq 2,$ be an $n$-dimensional compact smooth oriented Riemannian manifold with smooth boundary $\partial M$ and we denote by $\nabla,$ $\nabla^2,$ $\Delta$ and $dM$ the Riemannian connection, the Hessian, the Laplacian and the volume form on $M$, res-pectively, while considering them over $\partial M$ we will explicitly mention the boundary. We also denote by $h(X,Y)=g(\nabla_X\,\nu,Y),\,\,X,Y\in \mathfrak{X}(M),$ the second fundamental form associated to the unit outward normal vector field $\nu$ along $\partial M$, where  $\mathfrak{X}(M)$ is the Lie algebra  of smooth vector field on $M.$

In the middle of the last century many geometers tried to prove a conjecture concerning the Euclidean sphere as the unique compact orientable Riemannian manifold $(M^n,\,g)$ admitting a metric of constant scalar curvature $R$ and carrying a nontrivial conformal vector field $\xi$. Among them, we cite Bochner, Goldberg \cite{Goldberg1, Goldberg2}, Hsiung \cite{Hsiung}, Lichnerowicz, Nagano \cite{Nagano}, Obata \cite{Obata1} and Yano \cite{ny}; we address the reader to the book of Yano \cite{ Yano-Book} to a summary of those results. Despite of many effort to prove the conjecture, it remained opened until 1981 when Ejiri \cite{Ejiri} found  a counterexample to this conjecture building metrics of constant scalar curvature on warped product of type $\mathbb{S}^{1}\times_{h}N$, where $N$ is an $(n-1)$-dimensional Riemannian manifold with positive constant scalar curvature, $h$ is a positive function on a circle $\mathbb{S}^{1}$ satisfying a certain ordinary differential equation and $\xi=h\frac{\partial}{\partial t}$ is a  conformal vector field, see \cite{Ejiri} for details.

We recall that a smooth vector field $\xi\in  \mathfrak{X}(M)$ is said to be \textit{conformal} if
\begin{equation}\label{eqconformalfield}
	\mathcal{L}_{\xi}g=2f g
\end{equation}
for a smooth function $f$ on $M$, where $\mathcal{L}_{\xi}$ is the Lie derivative in the direction of $\xi$. The function $f$ is the conformal factor of $\xi$ (cf.~\cite{besse}). If $\xi$ is the gradient of a smooth function  on $M$, then $\xi$ is said to be a \textit{conformal gradient vector field}. In this case,  $\xi$ is  also closed. We say that $\xi$ is a nontrivial conformal vector field if it is a non-Killing conformal vector field.

An interesting  problem in Riemannian geometry is to find characterizations of spheres and hemispheres in the class of compact connected Riemannian manifolds with empty and non-empty boundary, respectively (see, e.g.,~\cite{abb, Deshmukh,deshmuk2,deshmuk, deshmuk3, evviana, hall,hall47}).

For example, one of such characterizations of spheres was given by  Obata~\cite{obata}, namely, a necessary and sufficient condition for an $n$-dimensional complete Riemannian manifold $(M^n,\,g)$ to be isometric to the $n$-sphere $\mathbb{S}^n(c)$ of constant curvature $c$ is that there exists a nonconstant smooth function $f$ on $M$ that satisfies $\nabla_{X} f=-cfX,\,X\in \mathfrak{X}(M),$ for some constant $c>0$, where $\nabla_{X}$ is the covariant derivative operator with respect to $X\in \mathfrak{X}(M)$. 

On the other hand, Reilly~\cite{reilly} proved that a  compact Riemannian manifold $M$ with totally geodesic boundary, which admits a nonconstant function $f$ on $M$ such that  $$\nabla^2 f=-c fg,$$ for some constant $c>0$, $f\geq 0$ on $M$ and $f=0$ on $\partial M$, is necessarily isometric to a hemisphere of $\mathbb{S}^{n}(c)$. In the same direction, Reilly~\cite{Reilly2} also proved that if a  compact, connected, oriented Riemannian manifold $M$ with connected nonempty boundary $\partial M$ that admits a nonconstant function $f$ on $M$ which  satisfies  $\nabla^2 f=-c fg$, for some constant $c>0$, and $f|_{\partial M}$ is constant, then $M$  is isometric to a geodesic ball on $\mathbb{S}^n(c)$.

Moreover, on a Riemannian manifold $(M^n,\,g)$, the Ricci operator $S$ is defined using Ricci tensor $Ric$ (see~\cite{besse}) by
\begin{equation}
Ric(X,Y)=g(SX,Y), \,\,\,X,Y\in\mathfrak{X}(M),
\end{equation}
and similarly, the rough Laplace operator on $(M^n,\,g)$, 
$\Delta:\mathfrak{X}(M) \to \mathfrak{X}(M),$  is defined by (cf.~\cite{duggal})
\begin{equation}
\Delta X=\sum_{i=1}^{n}(\nabla_{e_i} \nabla_{e_i} X-\nabla_{{\nabla}_{e_i}}e_i X), \,\,\,X\in\mathfrak{X}(M),
\end{equation}
where $\{e_1, \ldots, e_n\}$ is a local orthonormal frame on $M^n$. The rough Laplace operator is used to provide characterizations of the sphere as well as the Euclidean space.  (cf.~\cite{duggal,Garcia}). On Riemannian manifold $(M^n,\,g),$ we recall that de-Rham Laplace operator $\square: \mathfrak{X}(M) \to \mathfrak{X}(M)$  which is defined by (cf.~\cite{duggal}, p.83)
\begin{equation}
\square = S+\Delta
\end{equation}
is used to characterize a Killing vector field on a compact Riemannian manifold. It is important to mention that the equation $\square \xi=0$  is satisfied  if $\xi$ is a Killing vector field on a Riemannian manifold $(M, g)$ or a soliton vector field of a Ricci soliton $(M,\,g,\,\xi,\,\lambda)$ (cf.~\cite{deshmuk2}). In addition, Deshmukh, Ishan and Vilcu~\cite{deshmuk1} found two new characterizations of the $n$-dimensional sphere $\mathbb{S}^n(c)$ of constant curvature $c$, the first result being using the well-known de-Rham Laplace operator, whose presentation was made previously, while the second uses a nontrivial solution of the famous Fischer–Marsden differential equation (see~\cite{Fischer}), and which will be presented shortly after the statement of Theorem \ref{theorem1} (see Equation \eqref{FM}).

Next consider the hemisphere $\mathbb{S}^n_+(c)$ of constant curvature $c$ as a hypersurface  of the Euclidean space $\R^{n+1}$ provided with a unit normal $\xi $ and shape operator $B=-\sqrt{c}I$, where $I$ is the identity operator. One denotes by $S$ the Ricci operator of $\mathbb{S}^n_+(c)$, it is renowned that it can be written as
$$S=(n-1)cI.$$
On the other hand, we consider the unit vector field $\eta= e_{n+1}$, where  $e_{n+1}$ stands for the $(n+1)$-th vector of the canonical  basis in $\R^{n+1}$. Now, restricting $\eta$ to the hemisphere $\mathbb{S}^n_+(c)$, one  can  decompose  $\eta$ as follows 
\begin{equation}\label{eqdecompeta}
\eta=\textbf{u}+f\xi,
\end{equation}
where $f=\langle\eta,\xi\rangle$ and $\textbf{u}$ is the tangential projection of $\eta$ on the hemisphere $\mathbb{S}^n_+$ and $\langle \cdot,\cdot\rangle$ stands for the canonical metric of the  Euclidean space $\R^{n+1}$.
Given a smooth vector field $X$ on $\mathbb{S}^n_+$ and denoting by $\nabla $ the Riemannian connection on $\mathbb{S}^n_+$ with respect to the canonical metric, one can use \eqref{eqdecompeta} to calculate $\nabla_X \textbf{u}$ and the gradient of the smooth function $f$ on $\mathbb{S}^n_+(c)$  denoted by $\nabla f$. In fact, it is enough to take covariant derivative in \eqref{eqdecompeta} and use the Gauss-Weingarten equations to get
\begin{equation}\label{eqdnablau}
\nabla_X \textbf{u}=-\sqrt{c}fX, \,\,\, \nabla f=\sqrt{c}\textbf{u}.
\end{equation}
Now we can use \eqref{eqdnablau} to obtain   an expression  for the rough Laplacian acting on $\textbf{u}$ and the Laplace operator acting on $f$ as follows
\begin{equation}\label{eqlaplaceuf}
\triangle \textbf{u}=-c\textbf{u}, \,\,\, \triangle f=-ncf.
\end{equation}
Hence, one can use \eqref{eqlaplaceuf} to conclude that
\begin{equation}\label{eqquadradou}
\square \textbf{u}=(n-2)c\textbf{u}.
\end{equation}
Furthermore, due to our choice on $\eta$ we have $f|_{\partial \mathbb{S}^n_+(x)}=0$ and in addition  with equation \eqref{eqdnablau} we conclude that $f$ is not a constant  function and $\textbf{u}$ is not a parallel vector field. Now we may state our first main result. More precisely, 

\begin{theorem}\label{theorem1}
	Let $(M^n,\,g),$ $n > 2$, be a smooth compact Riemannian manifold with smooth totally geodesic boundary $\partial M$ and $\xi$ a smooth conformal vector field on $M$ with conformal factor $f$ satisfying $f|_{\partial M}=0$. Then $\square \xi=\lambda \xi$ for a constant $\lambda,$ if and only if, $\lambda>0$ and $(M^n,\,g)$ is isometric to the hemisphere  $\mathbb{S}^{n}_+\left(\dfrac{\lambda}{n-2}\right).$ 
\end{theorem}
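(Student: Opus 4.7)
The plan is to reduce the eigenvalue equation $\square \xi = \lambda \xi$ to an Obata-type equation $\nabla^2 f = -\tfrac{\lambda}{n-2}\, f\, g$, and then invoke Reilly's characterization of hemispheres recalled in the introduction. The bridge is the Yano-type identity $\square \xi = -(n-2)\,\nabla f$, valid for any conformal vector field with conformal factor $f$, which combined with the eigenvalue hypothesis forces $\xi$ to be a gradient and delivers the Obata equation after one substitution.

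The sufficiency direction is immediate from the computations \eqref{eqdnablau}--\eqref{eqquadradou} in the excerpt: on $\mathbb{S}^n_+(c)$ with $c = \lambda/(n-2)$, the tangential projection $\mathbf{u}$ of $e_{n+1}$ is a conformal vector field whose conformal factor (a nonzero multiple of $\langle \eta,\xi\rangle$) vanishes on the equator, the equator is totally geodesic, and $\square \mathbf{u} = (n-2)c\,\mathbf{u} = \lambda\,\mathbf{u}$, so one takes $\xi=\mathbf{u}$.

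For the converse direction, I would first prove the Yano-type identity. Writing $\nabla_X \xi = fX + BX$, the conformal equation \eqref{eqconformalfield} forces $B$ to be a skew-symmetric $(1,1)$-tensor. Evaluation of the rough Laplacian in a geodesic orthonormal frame gives $\Delta \xi = \nabla f + \operatorname{div} B$. Contracting the curvature identity
\begin{equation*}
R(X,Y)\xi \;=\; X(f)\,Y - Y(f)\,X + (\nabla_X B)Y - (\nabla_Y B)X
\end{equation*}
in $X$, and using $\operatorname{tr} B = 0$ together with the skew-symmetry of $\nabla_X B$, produces $S\xi = (1-n)\,\nabla f - \operatorname{div} B$; adding the two identities cancels $\operatorname{div} B$ and gives $\square \xi = -(n-2)\,\nabla f$.

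Combining this with $\square \xi = \lambda \xi$ yields $\lambda \xi = -(n-2)\,\nabla f$, so for $\lambda\neq 0$ we have $\xi = \nabla h$ with $h=-\tfrac{n-2}{\lambda} f$; the skew part $B$ consequently vanishes, and $\nabla_X \xi = fX$ reads $\nabla^2 h = f\, g$, which on substituting $h=-\tfrac{n-2}{\lambda}f$ becomes the Obata equation $\nabla^2 f = -\tfrac{\lambda}{n-2}f\, g$. Tracing and testing against $f$, Green's formula together with $f|_{\partial M}=0$ produces
\begin{equation*}
\int_M |\nabla f|^2\, dM \;=\; \frac{n\lambda}{n-2}\int_M f^2\, dM,
\end{equation*}
which forces $\lambda > 0$ (and rules out the degenerate case $\lambda=0$) once nontriviality of $\xi$ is used to secure $f\not\equiv 0$. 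The conservation law $|\nabla f|^2 + \tfrac{\lambda}{n-2} f^2 = \text{const}$, obtained by differentiating along $\nabla f$, together with a sign choice between $\xi$ and $-\xi$, justifies the nonnegativity hypothesis $f\ge 0$ of Reilly's theorem, which then identifies $M$ isometrically with $\mathbb{S}^n_+(\lambda/(n-2))$.

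The main obstacle is the clean derivation of the Yano-type identity $\square \xi = -(n-2)\nabla f$ for general (not necessarily gradient) conformal $\xi$; everything else—the reduction to the Obata equation, the integration-by-parts argument pinning the sign of $\lambda$, and the appeal to Reilly's rigidity—is routine. A minor subtlety, resolved via the conservation law above, is verifying the sign condition $f \ge 0$ required by the version of Reilly's theorem stated in the introduction.
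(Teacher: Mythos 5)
Your argument follows essentially the same route as the paper: the Yano-type identity $\square\xi=-(n-2)\nabla f$ (which is exactly the sum of the paper's formulas for $S\xi$ and $\Delta\xi$), the reduction of $\square\xi=\lambda\xi$ to $\nabla f=-\tfrac{\lambda}{n-2}\xi$ and hence to the Obata equation $\nabla^2 f=-\tfrac{\lambda}{n-2}fg$, the sign of $\lambda$ by integration against $f$ with $f|_{\partial M}=0$, and the converse via $\square\mathbf{u}=(n-2)c\,\mathbf{u}$ on the model. All of that is correct (your observation that $\lambda\neq 0$ forces $\xi$ to be a gradient, hence $\varphi=0$, is a slightly cleaner way to kill the skew part than the paper's polarization step, and your handling of $\lambda=0$ is fine).

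There is, however, one genuine gap: the final rigidity step. You invoke the version of Reilly's theorem that requires $f\geq 0$ on $M$, and you claim this sign condition is ``resolved via the conservation law'' $|\nabla f|^2+\tfrac{\lambda}{n-2}f^2=\mathrm{const}$ together with replacing $\xi$ by $-\xi$. That does not work: the first integral only shows that every interior critical value of $f$ equals $\pm\sqrt{(n-2)k/\lambda}$ and bounds $|f|$; it says nothing about whether $f$ changes sign in the interior, and flipping the sign of $\xi$ only helps if $f$ already has a definite sign. Ruling out interior sign changes of a Dirichlet eigenfunction satisfying the Obata equation requires a genuine additional argument (nodal-set or geodesic-comparison analysis as in Obata's original proof). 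The paper sidesteps this entirely by appealing instead to the second Reilly result recalled in the introduction (Theorem B of \cite{Reilly2}), which needs only that $f$ be nonconstant with $f|_{\partial M}$ constant and concludes that $M$ is a geodesic ball in $\mathbb{S}^n\bigl(\tfrac{\lambda}{n-2}\bigr)$; the totally geodesic boundary then forces that ball to be a hemisphere. Substituting that citation for your $f\geq 0$ argument closes the gap and makes your proof complete.
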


In addition, Fischer–Marsden considered the following differential equation (see \cite{Fischer}) on a Riemannian manifold $(M^n,\,g)$:
\begin{equation}\label{FM}
	(\Delta f)g+f Ric=\nabla^2 f.
\end{equation}
It is known that if a complete Riemannian manifold $(M^n,\,g)$ has a nontrivial solution $f$ to (\ref{FM}), then the scalar curvature $R$ of $g$ is a constant (see \cite{Bourguignon,Fischer}). We remark that Fischer and Marsden conjectured that if a compact Riemannian manifold admits a nontrivial solution of the differential Equation (\ref{FM}), then it must be an Einstein manifold.
Counterexamples to the conjecture were provided by Kobayashi \cite{Kobayashi}  and Lafontaine \cite{Lafontaine}.

Furthermore, recall that if an $n$-dimensional Riemannian manifold $(M,\,g)$ admits a nontrivial solution of the Fischer–Marsden differential equation \eqref{FM}, $n > 2$, then the non-trivial function $f$ satisfies
\begin{equation}
	\Delta f=-\frac{R}{n-1}f.
\end{equation}

Now, we consider an $n$-dimensional Riemannian manifold $(M,\,g)$ that admits a nontrivial conformal vector field $\xi$ with conformal factor $f$ that is a non-trivial solution of
the Fischer–Marsden differential equation \eqref{FM} and define a constant $\alpha$ by $R = n(n - 1)\alpha$ for this Riemannian manifold. Hence, we deduce $\Delta f=-n\alpha f$.
  
Thereby, we obtain the following characterization of the hemisphere, namely, 

\begin{theorem}\label{theorem2}Let $\xi$ be a nontrivial conformal vector field with conformal factor $f$ such that $f=0$ on $\partial M$  and associated operator $\varphi$ on an $n$-dimensional Riemannian manifold $(M^n,\, g)$, $n > 2$, with smooth  totally geodesic boundary $\partial M$. If $f$ is a nontrivial solution of the Fischer–Marsden equation \eqref{FM}, then
	\begin{equation}\label{eqtheorem2}
		\int_{M}Ric(\nabla f+\alpha \xi,\nabla f+\alpha \xi)dM\leq \alpha^2\int_{M}|\varphi|^2dM,
	\end{equation}
	holds for a constant $\alpha,$ defined by $R=n(n-1)\alpha$. Moreover, equality holds if and only if $M^n$ is isometric to the hemisphere $\mathbb{S}_{+}^n(\alpha)$.
\end{theorem}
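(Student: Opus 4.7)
The plan is to introduce the auxiliary vector field $W := \nabla f + \alpha \xi$ and derive a compact expression for $\nabla W$. The Fischer--Marsden equation~\eqref{FM}, combined with $\Delta f = -n\alpha f$, yields $\nabla_X \nabla f = f(S(X) - n\alpha X)$, where $S$ denotes the Ricci operator. Since $\xi$ is conformal with factor $f$, one writes $\nabla_X \xi = fX + \varphi(X)$ with the associated operator $\varphi$ skew-symmetric. Using $\tfrac{R}{n} = (n-1)\alpha$ and combining these, one obtains
\[
\nabla_X W = f\, \mathring{S}(X) + \alpha\, \varphi(X),
\]
where $\mathring{S} := S - \tfrac{R}{n}I$ is the traceless Ricci operator. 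Two immediate consequences are $\operatorname{div} W = 0$ and, by orthogonality of symmetric and skew-symmetric parts, $|\nabla W|^2 = f^2 |\mathring{S}|^2 + \alpha^2 |\varphi|^2$.

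Next I would unravel $\int_M Ric(W,W)\, dM$ by writing $S(W) = \mathring{S}(\nabla f) - \alpha (\operatorname{div}\varphi)^\vee$, where $(\operatorname{div}\varphi)^\vee := \sum_i (\nabla_{e_i}\varphi)(e_i)$ and I have used the standard identity $S(\xi) = -(n-1)\nabla f - (\operatorname{div}\varphi)^\vee$, obtained by tracing the curvature formula
\[
R(X,Y)\xi = X(f)Y - Y(f)X + (\nabla_X\varphi)(Y) - (\nabla_Y\varphi)(X).
\]
The $(\operatorname{div}\varphi)^\vee$-contribution is handled by integration by parts, producing $\alpha^2 \int_M |\varphi|^2\, dM$ plus a boundary term $\alpha \int_{\partial M} g(\varphi(W), \nu)\, d\sigma$. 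To kill this boundary integral I exploit that $\partial M$ is totally geodesic and $f|_{\partial M} = 0$: these force (a) $\xi^T := \xi - g(\xi,\nu)\nu$ to be a Killing vector field on $\partial M$, and (b) $\varphi(\nu) = -\nabla^{\partial M}\phi$ on $\partial M$, where $\phi := g(\xi,\nu)$. A short computation then gives $g(\varphi(W),\nu) = \alpha\, \xi^T(\phi)$, whose integral vanishes by the divergence theorem on the closed manifold $\partial M$, since $\operatorname{div}^{\partial M}\xi^T = 0$.

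For the interior piece $\int_M g(\mathring{S}(\nabla f), W)\, dM$, I apply Bochner's formula to $f$, noting that its boundary correction vanishes because $\operatorname{Hess} f = f(S - n\alpha I)$ is zero on $\partial M$. This yields
\[
\int_M Ric(\nabla f, \nabla f)\, dM = n(n-1)\alpha^2 \int_M f^2\, dM - \int_M f^2 |\mathring{S}|^2\, dM,
\]
hence $\int_M \mathring{Ric}(\nabla f, \nabla f)\, dM = -\int_M f^2 |\mathring{S}|^2\, dM$. A parallel calculation using the formula for $S(\xi)$, combined with $\int_M g(\nabla f, (\operatorname{div}\varphi)^\vee)\, dM = 0$ (another Stokes argument, leveraging $f|_{\partial M}=0$ and $\operatorname{tr}(S\varphi) = 0$), forces $\int_M \mathring{Ric}(\nabla f, \xi)\, dM = 0$. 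Assembling these pieces produces the sharp identity
\[
\int_M Ric(W,W)\, dM - \alpha^2 \int_M |\varphi|^2\, dM = -\int_M f^2 |\mathring{S}|^2\, dM \leq 0,
\]
which is~\eqref{eqtheorem2}.

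For the equality case, $f\, \mathring{S} \equiv 0$ together with $f \not\equiv 0$ forces $\mathring{S} \equiv 0$, so $M$ is Einstein. The Fischer--Marsden equation then collapses to $\nabla^2 f = -\alpha f g$, and $\alpha > 0$ because $n\alpha$ is a positive Dirichlet eigenvalue of $-\Delta$. Reilly's theorem recalled in the introduction then forces $M$ to be isometric to $\mathbb{S}^n_+(\alpha)$. The main obstacle I anticipate is the bookkeeping of the boundary contributions: establishing $\varphi(\nu) = -\nabla^{\partial M}\phi$ on $\partial M$ from $\mathcal{L}_\xi g = 2fg$ relies essentially on the totally geodesic hypothesis to decouple the intrinsic and extrinsic parts of $\nabla \xi$ at $\partial M$, and this is where the geometry enters most delicately.
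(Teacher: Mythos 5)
Your proposal is correct, and while it arrives at the same inequality, it does so by a genuinely different mechanism than the paper. The paper expands $\int_M Ric(\nabla f+\alpha\xi,\nabla f+\alpha\xi)\,dM$ into three integrals, evaluates each via separate integration-by-parts lemmas (Lemmas \ref{lemmarici2}, \ref{lemmarici3} and \ref{lemmarici5}), reduces the difference to $\int_M\bigl(\tfrac1n(\Delta f)^2-|\nabla^2f|^2\bigr)dM$, and concludes by the Cauchy--Schwarz inequality $|\nabla^2f|^2\ge\tfrac1n(\Delta f)^2$; note that this derivation of the inequality uses only the trace consequence $\Delta f=-n\alpha f$ of \eqref{FM}, reserving the full Fischer--Marsden equation for the rigidity step. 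You instead substitute \eqref{FM} pointwise from the outset to get $\nabla W=f\mathring{S}+\alpha\varphi$ for $W=\nabla f+\alpha\xi$, and your bookkeeping (your $(\operatorname{div}\varphi)^{\vee}$ term reproduces Lemma \ref{lemmarici3}, your Bochner computation reproduces Lemmas \ref{lemmarici2} and \ref{lemmarici5}, and your boundary analysis via $\varphi(\nu)=-\nabla^{\partial M}\phi$ and ${\rm div}_{\partial M}\xi^{T}=0$ is the computation hidden inside Lemmas \ref{lemmadiv} and \ref{lemmarici3}) yields the exact identity $\int_M Ric(W,W)\,dM-\alpha^2\int_M|\varphi|^2\,dM=-\int_M f^2|\mathring{S}|^2\,dM$. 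Since \eqref{FM} gives $|\nabla^2f|^2-\tfrac1n(\Delta f)^2=f^2|\mathring{S}|^2$ pointwise, the two deficits coincide, but your form makes the geometric meaning of the defect transparent and streamlines the equality case: $f\mathring{S}\equiv0$ yields $\nabla^2f=f(\mathring{S}-\alpha I)=-\alpha fg$ directly, so you do not actually need the intermediate assertion that $M$ is Einstein (which, as literally stated, only follows on the set $\{f\neq0\}$ and is the one small logical jump in your write-up, though a harmless one). The final appeal to Reilly's theorem and the positivity of $\alpha$ match the paper's conclusion.
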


Now we define the static triple to get an interesting corollary of the previous theorem. 

\begin{definition}\label{DefStatic}
	A complete and connected Riemannian manifold $(M^n, g)$ with a (possibly nonempty) boundary $\Sigma$ is said to be static if there exists a non-negative function $f$ on $M$ satisfying
	\begin{equation}\label{static}
		(\Delta f)g+f Ric =\nabla^2 f,
	\end{equation}in $M \setminus\Sigma$ and $\Sigma = f^{-1}(0)$. In this case $(M^n,\,g,\,f)$ is called a static triple or simply a static metric.
\end{definition}

We remark that Eq. (\ref{static}) appears in General Relativity, where it defines static solutions of Einstein field equations. Corvino et al. [~\cite{CM}, Proposition 2.1] showed that a static metric also has constant scalar curvature $R$. When the scalar curvature is positive there exists a classic conjecture called \textit{cosmic no-hair conjecture}, formulated by Boucher et al.~\cite{Bou} which claims that:

\vspace{0,4cm}
\textit{The only $n$-dimensional compact static triple $(M^n,\,g,\,f)$ with positive scalar curvature and connected boundary $\Sigma$ is given by a round hemisphere $\mathbb{S}^{n}_{+}$, where the function $f$ is taken as the height function.}
\vspace{0,4cm}

As a consequence of the Theorem \ref{theorem2}, we get a partial answer to the cosmic no-hair conjecture, in other words, we get the following:

\begin{corollary}\label{coro1}
Let $(M^n,\,g,\,f)$ a $n$-dimensional compact static triple  with scalar curvature $R=n(n-1)\alpha$ and smooth totally geodesic boundary $\partial M$ and $\xi$ be a nontrivial conformal vector field with conformal factor $f$ and associated operator $\varphi$. Supposing $\displaystyle\int_{M}Ric(\nabla f+\alpha \xi,\nabla f+\alpha \xi)\geq \alpha^2\displaystyle\int_{M}|\varphi|^2,$ then $(M^n,\,g,\,f)$ is isometric to the hemisphere $\mathbb{S}^{n}_{+}(\alpha)$.
\end{corollary}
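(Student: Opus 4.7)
The plan is essentially to recognize that Corollary \ref{coro1} is a straightforward specialization of Theorem \ref{theorem2}, so I would not try to do any new differential geometry; I would only set up the hypotheses correctly and invoke the equality case.

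First I would check that every hypothesis of Theorem \ref{theorem2} is in force. The static condition (\ref{static}) is exactly the Fischer--Marsden equation (\ref{FM}), so $f$ is a nontrivial solution of (\ref{FM}) on $M$. By Definition \ref{DefStatic} the function $f$ is non-negative on $M$ and vanishes precisely on $\Sigma = \partial M$, in particular $f|_{\partial M}=0$. We are given that $\partial M$ is smooth and totally geodesic, and that $\xi$ is a nontrivial conformal vector field with conformal factor $f$ and associated operator $\varphi$. As noted in the discussion preceding Theorem \ref{theorem2}, Corvino's result guarantees that the scalar curvature $R$ is constant, so defining $\alpha$ via $R = n(n-1)\alpha$ is legitimate; here $\alpha$ is already supplied by hypothesis, and the assumption that $R$ is positive means $\alpha>0$.

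Second I would apply Theorem \ref{theorem2}. Under the hypotheses just verified, Theorem \ref{theorem2} yields the inequality
\begin{equation*}
	\int_{M}Ric(\nabla f+\alpha \xi,\nabla f+\alpha \xi)\,dM\;\leq\;\alpha^2\int_{M}|\varphi|^2\,dM.
\end{equation*}
The standing assumption of the corollary is the reverse inequality
\begin{equation*}
	\int_{M}Ric(\nabla f+\alpha \xi,\nabla f+\alpha \xi)\,dM\;\geq\;\alpha^2\int_{M}|\varphi|^2\,dM.
\end{equation*}
Combining the two forces equality in the integral inequality of Theorem \ref{theorem2}.

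Third, I would invoke the rigidity statement in Theorem \ref{theorem2}: equality occurs if and only if $(M^n,g)$ is isometric to the hemisphere $\mathbb{S}_{+}^{n}(\alpha)$. This is exactly the conclusion of the corollary.

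There is essentially no technical obstacle here once Theorem \ref{theorem2} is in hand; the only thing to be careful about is confirming that the static potential $f$ simultaneously plays the role of the Fischer--Marsden solution and of the conformal factor of $\xi$, which is built into the statement of the corollary. If one wanted to be pedantic, one could also remark that the positivity of $\alpha$ (coming from the positive scalar curvature assumption in the cosmic no-hair setting) is consistent with the hemisphere $\mathbb{S}^n_+(\alpha)$ being well-defined, but no further argument is required.
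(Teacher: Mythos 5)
Your proposal is correct and follows essentially the same route as the paper: the static equation is the Fischer--Marsden equation, so Theorem \ref{theorem2} gives the inequality $\int_M Ric(\nabla f+\alpha\xi,\nabla f+\alpha\xi)\,dM \leq \alpha^2\int_M|\varphi|^2\,dM$, which combined with the hypothesized reverse inequality forces equality, and the rigidity case of Theorem \ref{theorem2} then yields the hemisphere. The paper's own argument is exactly this two-line observation; your additional verification of the hypotheses (in particular that the static potential vanishes on $\partial M$ and serves as the conformal factor) is a harmless elaboration.
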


From Theorem \ref{theorem2} and the integral hypothesis of Corollary \ref{coro1}, we have $$\displaystyle\int_{M}Ric(\nabla f+\alpha \xi,\nabla f+\alpha \xi)dM= \alpha^2\displaystyle\int_{M}|\varphi|^2dM,$$ and, therefore, the result of the corollary follows.

This article is organized as follows. In Section \ref{prel}, we review some classical tensors and basic facts about conformal vector fields. Moreover, we present some key lemmas that will be used in the proof of the main results. In Section \ref{mr}, we prove Theorem \ref{theorem1} and Theorem \ref{theorem2}.

\section{Preliminaries}\label{prel}

In this section, we present basic facts that will be useful to obtain the main results. Remember from equation \eqref{eqconformalfield} that a vector field $\xi$ on Riemannian manifold $(M^n,\,g)$ is called conformal if $\mathcal{L}_{\xi}g$ is a multiple of $g.$ As a straightforward consequence of Koszul's formula, we have the following identity for any smooth vector field $Z$ on $M$,
\begin{equation*}
	2g(\nabla_X Z,Y)=\mathcal{L}_{Z}g(X,Y)+d\eta (X,Y), \,\,\,X,Y\in\mathfrak{X}(M),
\end{equation*}
where $\eta$ stands for the dual 1-form associated to $Z$, that is, $\eta(Y)=g( Z,Y )$. We note that we can define $\varphi$
the following skew symmetric (1,1)-tensor:
\begin{equation*}
	d\eta(X,Y)=2g(\varphi(X),Y),\,\,\,X,Y\in\mathfrak{X}(M).
\end{equation*}
Therefore,  one can use the above equations to get
\begin{equation}\label{eqconf2}
	\nabla_X\xi=\Phi(X), \,\,\, X\in \mathfrak{X}(M),
\end{equation} where $\Phi:=fI+\varphi(\cdot)$ and $I$ is the Identity Operator. Note that $\Phi$ gives us an idea of how much of the field $\xi$ is not closed vector field. There are several papers involving closed conformal vector fields  with many  authors working on it  (see, e.g.,~\cite{caminha, hicks, tanno}).

Note that we can identify $\varphi$ with a  skew symmetric $(0,2)$-tensor and $\xi$ with the tensor $\xi(Y)=g(\xi,Y), \,\,Y\in \mathfrak{X}(M)$, to rewrite \eqref{eqconf2} as follows
\begin{equation}\label{eqderxitens}
	\nabla \xi=fg+\varphi.
\end{equation}

Moreover, we adopt the following expression for curvature tensor
\begin{equation}
R(X,Y)Z=\nabla_X \nabla_Y Z-\nabla_Y \nabla_X Z-\nabla_{[X,Y]}Z. \nonumber
\end{equation} 
Whence, one can use  Equation \eqref{eqconf2} to get
\begin{equation}
R(X,Y)\xi=X(f)Y-Y(f)X+(\nabla \varphi)(X,Y)-(\nabla \varphi)(Y,X),
\end{equation}
where $(\nabla \varphi)(X,Y)=\nabla_X \varphi Y -\varphi(\nabla_X Y).$

Using the above equation and the expression for the Ricci tensor
$$Ric(X,Y)=\sum_{i=1}^n g(R(e_i,X)Y,e_i),$$
where $\{e_1, \ldots, e_n\}$ is a local orthonormal frame, we obtain
\begin{equation}\label{ric1}
Ric(\xi,Y)=-(n-1)Y(f)-\sum_{i=1}^n g(Y,(\nabla \varphi)(e_i,e_i)),
\end{equation}
where we used the skew symmetry of the operator $\varphi$. The above equation gives
\begin{equation}\label{ric2}
S(\xi)=-(n-1)\nabla f-\sum_{i=1}^n (\nabla \varphi)(e_i,e_i).
\end{equation}

Now, using Equation \eqref{eqconf2}, we compute the action of the rough Laplace operator $\Delta$ on
the vector field $\xi$ and find
\begin{equation}\label{rough laplace}
\Delta\xi=\nabla f+\sum_{i=1}^n(\nabla \varphi)(e_i,e_i).
\end{equation}

The following lemma, obtained previously in [~\cite{evviana}, Lemma 1], will be useful in proof of Theorem \ref{theorem1}.

\begin{lemma}\label{lemmadiv}Let $(M^n,\,g)$ be a smooth compact Riemannian manifold with smooth totally geodesic boundary $\partial M$ and $\xi$ a smooth conformal vector field on $M$ with conformal factor $f$ satisfying $f|_{\partial M}=0$. Denote by ${\rm div}$ and ${\rm div}_{\partial M}$ the divergence operators on $M$ and $\partial M$, respectively, and
	by $\xi^T$ the tangential part of $\xi$ on $\partial M$. Then,
	\begin{equation}\label{eqdiv}
	{\rm div}(\xi)=nf,\,\,\,\,\,\,\,\ {\rm div}_{\partial M}(\xi^T)=(n-1)f.
	\end{equation}
	Furthermore,
	\begin{equation}\label{eqint}
	\int_M g(\nabla f,\xi) dM=-n\int_M f^2 dM.
	\end{equation}
\end{lemma}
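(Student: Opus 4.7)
The three identities can be obtained in sequence, with the first one serving as the key ingredient for the other two.

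\textbf{Step 1 (Computing $\operatorname{div}(\xi)$).} Starting from equation \eqref{eqconf2}, namely $\nabla_X \xi = fX + \varphi(X)$, I would take the trace with respect to a local orthonormal frame $\{e_1,\dots,e_n\}$. Since $\varphi$ is a skew-symmetric $(1,1)$-tensor, its trace vanishes, while $\operatorname{trace}(fI) = nf$. This immediately yields $\operatorname{div}(\xi) = nf$ on all of $M$.

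\textbf{Step 2 (Computing $\operatorname{div}_{\partial M}(\xi^T)$).} At a point $p \in \partial M$, I would choose an orthonormal frame $\{e_1,\dots,e_{n-1},e_n=\nu\}$ with $\{e_i\}_{i=1}^{n-1}$ tangent to $\partial M$. Writing $\xi = \xi^T + g(\xi,\nu)\nu$ along $\partial M$ and expanding $\nabla_{e_i}\xi$ for $i\le n-1$, one gets
\begin{equation*}
g(\nabla_{e_i}\xi, e_i) = g(\nabla_{e_i}\xi^T, e_i) + g(\xi,\nu)\,h(e_i,e_i),
\end{equation*}
where the extra tangential/normal cross-terms vanish. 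Summing over $i=1,\dots,n-1$ and using that $\partial M$ is totally geodesic (so $h\equiv 0$ and $\nabla^{\partial M}_{e_i}\xi^T = (\nabla_{e_i}\xi^T)^T$) produces $\sum_{i=1}^{n-1} g(\nabla_{e_i}\xi, e_i) = \operatorname{div}_{\partial M}(\xi^T)$. On the other hand, from Step 1, $g(\nabla_\nu \xi,\nu) = f\, g(\nu,\nu) + g(\varphi\nu,\nu) = f$. Combining,
\begin{equation*}
nf = \operatorname{div}(\xi)\big|_{\partial M} = \operatorname{div}_{\partial M}(\xi^T) + f,
\end{equation*}
hence $\operatorname{div}_{\partial M}(\xi^T) = (n-1)f$ along $\partial M$ (both sides in fact vanish there since $f|_{\partial M}=0$).

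\textbf{Step 3 (Integral identity).} Now I would apply the divergence theorem to the vector field $f\xi$ on $M$. Using $\operatorname{div}(f\xi) = g(\nabla f,\xi) + f\operatorname{div}(\xi) = g(\nabla f,\xi) + nf^2$ and the boundary condition $f|_{\partial M}=0$, I get
\begin{equation*}
\int_M \big(g(\nabla f,\xi) + nf^2\big)\, dM = \int_{\partial M} f\, g(\xi,\nu)\, dA = 0,
\end{equation*}
which rearranges to $\int_M g(\nabla f,\xi)\, dM = -n\int_M f^2\, dM$.

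The computation is essentially routine; the only subtle point is the boundary decomposition in Step 2, where the totally geodesic hypothesis is needed precisely to kill the mean-curvature contribution and identify the extrinsic divergence of $\xi^T$ with its intrinsic boundary divergence.
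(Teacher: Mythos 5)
Your proof is correct: tracing $\nabla\xi=fI+\varphi$ gives $\operatorname{div}(\xi)=nf$, the boundary-frame decomposition with $h\equiv 0$ correctly isolates $\operatorname{div}_{\partial M}(\xi^T)=\operatorname{div}(\xi)-g(\nabla_\nu\xi,\nu)=(n-1)f$, and the divergence theorem applied to $f\xi$ with $f|_{\partial M}=0$ yields the integral identity. The paper itself does not reprove this lemma but imports it from \cite{evviana} (Lemma~1); your argument is the standard one and fills that gap completely.
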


Moreover, the second and third authors obtained integral expressions involving the conformal field and the conformal factor (see \cite{evviana}, Lemmas 2.1 and 2.4). More precisely, they proved 

\begin{lemma}\label{lemmarici}Let $(M^n,\,g)$ be a smooth compact Riemannian manifold with  smooth boundary $\partial M$ and constant scalar curvature $R$. Let $\xi$ be  a smooth conformal vector field on $M$ with conformal factor $f$ such that $f=0$ on $\partial M$. Then,
	\begin{equation}\label{eqricci}
		\int_M Ric(\xi,\nabla f)dM=-R\int_Mf^2 dM.
	\end{equation}
\end{lemma}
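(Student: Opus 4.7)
The plan is to use integration by parts, the contracted second Bianchi identity (which is available because $R$ is constant), and the decomposition $\nabla \xi = fg + \varphi$ from \eqref{eqderxitens}, together with the algebraic fact that a symmetric tensor contracted with a skew tensor vanishes.

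First I would rewrite the integrand in index notation as $Ric(\xi,\nabla f)=R_{ij}\xi^{i}\nabla^{j}f$ and integrate by parts, moving the derivative off $f$. Since $f=0$ on $\partial M$, the boundary term from the divergence theorem drops out, leaving
\begin{equation*}
\int_{M} R_{ij}\xi^{i}\nabla^{j}f\, dM = -\int_{M} f\,\nabla^{j}\!\bigl(R_{ij}\xi^{i}\bigr)\, dM.
\end{equation*}

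Next I would expand $\nabla^{j}(R_{ij}\xi^{i}) = (\nabla^{j}R_{ij})\xi^{i} + R_{ij}\nabla^{j}\xi^{i}$. The contracted second Bianchi identity gives $\nabla^{j}R_{ij}=\tfrac{1}{2}\nabla_{i}R$, and because $R$ is constant this term vanishes identically. For the remaining term, I substitute $\nabla^{j}\xi^{i}=fg^{ji}+\varphi^{ji}$ from \eqref{eqderxitens}, so
\begin{equation*}
R_{ij}\nabla^{j}\xi^{i}= f R_{ij}g^{ji} + R_{ij}\varphi^{ji} = fR + R_{ij}\varphi^{ji}.
\end{equation*}
Since $Ric$ is symmetric and $\varphi$ is skew-symmetric, the contraction $R_{ij}\varphi^{ji}$ is zero (swapping dummy indices and using both symmetries gives $R_{ij}\varphi^{ji}=-R_{ij}\varphi^{ji}$). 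Assembling the pieces yields $\nabla^{j}(R_{ij}\xi^{i})=Rf$, hence
\begin{equation*}
\int_{M} Ric(\xi,\nabla f)\, dM = -\int_{M} f\cdot Rf\, dM = -R\int_{M} f^{2}\, dM.
\end{equation*}

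There is no real obstacle here beyond careful use of boundary vanishing: the proof is essentially a one-line computation once one recognizes that (i) $R$ constant is exactly what eliminates the $\text{div}(Ric)$ term, and (ii) the symmetric/skew-symmetric clash kills the $\varphi$-contribution. If one wished to avoid index notation, the same argument can be phrased intrinsically by noting that $Ric$ is divergence-free when $R$ is constant, while $\nabla\xi$ decomposes into a symmetric part $fg$ and an antisymmetric part $\varphi$, and only the symmetric part pairs nontrivially with $Ric$.
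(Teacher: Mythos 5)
Your argument is correct: integrating by parts on $\nabla^{j}(fR_{ij}\xi^{i})$ with the boundary term killed by $f|_{\partial M}=0$, using the contracted Bianchi identity together with $R$ constant to discard $(\nabla^{j}R_{ij})\xi^{i}$, and using the symmetric/skew pairing to reduce $R_{ij}\nabla^{j}\xi^{i}$ to $fR$ gives exactly \eqref{eqricci}. The paper does not reprove this lemma (it cites Lemma 2.1 of \cite{evviana}), but the cited proof is the same computation written invariantly as ${\rm div}(fS\xi)=g(S\xi,\nabla f)+f\,{\rm div}(S\xi)$ with ${\rm div}(S\xi)=\tfrac12\xi(R)+\langle S,\nabla\xi\rangle=fR$, so your proposal matches it in substance.
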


\begin{lemma}\label{lemmarici2}Let $(M^n,\,g)$ be a smooth compact Riemannian manifold with smooth totally geodesic boundary $\partial M$. Let $\xi$ be  a smooth conformal vector field on $M$ with conformal factor $f$ such that $f=0$ on $\partial M$ and $\Delta f=-n\alpha f$. Then,
	\begin{equation}\label{eqricci2}
		\int_M Ric(\nabla f,\nabla f)dM=-\int_M(|\nabla^2f|^2-(\Delta f)^2)dM,
	\end{equation} 
\end{lemma}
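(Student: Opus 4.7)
The strategy is to apply the Bochner formula to $f$ and convert the resulting identity into divergence form, so that integration over $M$ reduces the problem to showing that every boundary contribution cancels. All three hypotheses ($f|_{\partial M}=0$, totally geodesic $\partial M$, and $\Delta f=-n\alpha f$) will enter crucially at the boundary step.

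First I would start from the Bochner identity
$$\tfrac{1}{2}\Delta|\nabla f|^{2}=|\nabla^{2} f|^{2}+Ric(\nabla f,\nabla f)+g(\nabla\Delta f,\nabla f),$$
and then rewrite $g(\nabla\Delta f,\nabla f)=\mathrm{div}(\Delta f\,\nabla f)-(\Delta f)^{2}$ to express the integrand of interest as a pure divergence plus $Ric(\nabla f,\nabla f)$. Integrating over $M$ and applying Stokes's theorem yields
$$\int_{M}\bigl[|\nabla^{2} f|^{2}+Ric(\nabla f,\nabla f)-(\Delta f)^{2}\bigr]\,dM=\int_{\partial M}\bigl[\nabla^{2} f(\nu,\nabla f)-\Delta f\cdot \nu(f)\bigr]\,dA.$$
So the desired identity is equivalent to showing that the right-hand boundary integral is zero.

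The term $\Delta f\cdot \nu(f)$ vanishes on $\partial M$ at once, since $f|_{\partial M}=0$ together with $\Delta f=-n\alpha f$ gives $\Delta f|_{\partial M}=0$. For the first summand, the Dirichlet condition $f|_{\partial M}=0$ forces the tangential component of $\nabla f$ along $\partial M$ to vanish, so $\nabla f=\nu(f)\,\nu$ on $\partial M$, and hence $\nabla^{2} f(\nu,\nabla f)=\nu(f)\,\nabla^{2} f(\nu,\nu)$ along the boundary. To kill this, I would next compute the tangential Hessian: for $X,Y\in\mathfrak{X}(\partial M)$, the totally geodesic hypothesis gives $\nabla_{X}Y\in T\partial M$, and because $f\equiv 0$ on $\partial M$ both $X(Y(f))|_{\partial M}=0$ and $(\nabla_{X}Y)(f)|_{\partial M}=0$, so $\nabla^{2} f(X,Y)=0$ for all $X,Y$ tangent to $\partial M$. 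Picking a frame $\{e_{1},\ldots,e_{n-1},\nu\}$ adapted to the boundary, the trace decomposition
$$\Delta f=\nabla^{2} f(\nu,\nu)+\sum_{i=1}^{n-1}\nabla^{2} f(e_{i},e_{i})$$
combined with $\Delta f|_{\partial M}=0$ then forces $\nabla^{2} f(\nu,\nu)|_{\partial M}=0$, killing the last boundary term.

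The main obstacle is the bookkeeping of the boundary terms rather than any interior estimate: one must exploit simultaneously $f|_{\partial M}=0$ (which fixes the tangential part of $\nabla f$), the total geodesy of $\partial M$ (which wipes out the tangential Hessian of $f$), and the eigenvalue-type relation $\Delta f=-n\alpha f$ (which then forces the normal-normal component of the Hessian to vanish as well). Once all three ingredients are combined, the boundary contributions disappear and a rearrangement of the integrated Bochner identity gives exactly \eqref{eqricci2}.
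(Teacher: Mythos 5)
Your proof is correct. The paper does not actually prove Lemma \ref{lemmarici2}; it quotes it from \cite{evviana} (Lemma 2.4), and the argument there is the same standard one you give: integrate the Bochner identity (equivalently, apply Reilly's formula) and check that the boundary contribution $\int_{\partial M}\bigl[\nabla^{2}f(\nu,\nabla f)-\Delta f\,\nu(f)\bigr]\,d(\partial M)$ vanishes under the Dirichlet condition and total geodesy. Your boundary analysis is sound; one small remark is that the eigenvalue hypothesis $\Delta f=-n\alpha f$ is not actually needed at this step: since $\nabla f=\nu(f)\nu$ on $\partial M$, the boundary integrand equals $\nu(f)\bigl(\nabla^{2}f(\nu,\nu)-\Delta f\bigr)$, and your own computation of the tangential Hessian already shows $\Delta f=\nabla^{2}f(\nu,\nu)$ on $\partial M$, so the integrand vanishes identically without invoking $\Delta f|_{\partial M}=0$.
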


Next, using the ideas of~\cite{deshmuk1}, we present a lemma that will be useful in proofing the results of section \ref{mr}.

\begin{lemma}\label{lemmarici3}Let $(M^n,\,g)$ be a smooth compact Riemannian manifold with smooth totally geodesic boundary $\partial M$. Let $\xi$ be  a smooth conformal vector field on $M$ with conformal factor $f$ and $\varphi$ the skew symmetric tensor associated to $\xi$ such that $f=0$ on $\partial M$. Then,
	\begin{equation}\label{eqricci3}
		\int_M Ric(\xi,\xi)dM=\int_M(|\varphi|^2+n(n-1)f^2)dM.
	\end{equation}
\end{lemma}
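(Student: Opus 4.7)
My plan is to compute $\mathrm{div}(\varphi(\xi))$ pointwise, apply the divergence theorem, and then show that the resulting boundary term vanishes. Expanding with a local orthonormal frame $\{e_i\}$,
$$\mathrm{div}(\varphi(\xi)) = \sum_i g\bigl((\nabla_{e_i}\varphi)(\xi), e_i\bigr) + \sum_i g\bigl(\varphi(fe_i + \varphi(e_i)), e_i\bigr).$$
The second sum reduces to $-|\varphi|^2$ because the skew-symmetry of $\varphi$ kills the $f$-trace and yields $\sum_i g(\varphi^2(e_i), e_i) = -|\varphi|^2$; the first sum, after using the skew-symmetry of $\nabla\varphi$ in its arguments to move $\xi$ outside and substituting \eqref{ric2}, becomes $Ric(\xi, \xi) + (n-1)g(\nabla f, \xi)$. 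Integrating over $M$, replacing $\int_M g(\nabla f, \xi)\,dM$ by $-n\int_M f^2\,dM$ via Lemma \ref{lemmadiv}, and applying the divergence theorem yields
$$\int_{\partial M} g(\varphi(\xi), \nu)\, d\partial M = \int_M Ric(\xi, \xi)\, dM - n(n-1)\int_M f^2\, dM - \int_M |\varphi|^2\, dM,$$
so the lemma reduces to showing that the boundary integral on the left vanishes.

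For the boundary term, I would first polarize $\nabla_X \xi = fX + \varphi(X)$ against $\xi$, using $\tfrac{1}{2}X(|\xi|^2) = g(\nabla_X \xi, \xi)$ and the skew-symmetry of $\varphi$, to derive the pointwise identity $\varphi(\xi) = f\xi - \tfrac{1}{2}\nabla|\xi|^2$. On $\partial M$, where $f = 0$, this gives $g(\varphi(\xi), \nu) = -\tfrac{1}{2}\nu(|\xi|^2)$. Decomposing $\xi|_{\partial M} = \xi^T + a\nu$ with $a := g(\xi, \nu)$, and invoking the totally geodesic hypothesis (so that $g(\nu, \nabla_X Y) = 0$ whenever $X, Y$ are tangent to $\partial M$), a direct computation gives $g(\nu, \nabla_{\xi^T}\xi) = \xi^T(a)$; since $\nabla_{\xi^T}\xi = \varphi(\xi^T)$ on $\partial M$, the skew-symmetry of $\varphi$ then yields $\nu(|\xi|^2) = -2\xi^T(a)$. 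Finally, applying the divergence theorem on the closed manifold $\partial M$ to $a\xi^T$,
$$0 = \int_{\partial M} \mathrm{div}_{\partial M}(a\xi^T)\, d\partial M = \int_{\partial M}\bigl[(n-1)af + \xi^T(a)\bigr]\, d\partial M = \int_{\partial M} \xi^T(a)\, d\partial M,$$
after using the second identity of Lemma \ref{lemmadiv} together with $f|_{\partial M} = 0$. Hence the boundary integral vanishes and \eqref{eqricci3} follows.

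The crux is the boundary step, which uses both hypotheses together: the totally geodesic assumption discards the second-fundamental-form contributions that would otherwise obstruct the identity $g(\nu, \nabla_{\xi^T}\xi) = \xi^T(a)$, while $f|_{\partial M} = 0$ is precisely what cancels the residual tangential divergence on $\partial M$ through Lemma \ref{lemmadiv}. Everything else is a formal divergence computation driven by the skew-symmetry of $\varphi$ and the Ricci identity \eqref{ric2}.
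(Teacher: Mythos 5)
Your argument is correct, and its interior part coincides with the paper's: both compute $\mathrm{div}(\varphi(\xi))$ using $\nabla_{e_i}\xi=fe_i+\varphi(e_i)$ and the skew-symmetry of $\varphi$ and $\nabla\varphi$, substitute \eqref{ric2}, integrate, and invoke Lemma \ref{lemmadiv} to reduce \eqref{eqricci3} to the vanishing of $\int_{\partial M}g(\varphi(\xi),\nu)\,d(\partial M)$. Where you genuinely diverge is in killing that boundary term. The paper stays with the identity $g(\varphi(\xi),\nu)=-\tfrac12\,\nu(|\xi|^2)$ and then argues via $\mathrm{div}_{\partial M}(|\xi|^2\nu)$; as printed, that step is rather terse (the displayed claim $\mathrm{div}_{\partial M}(|\xi|^2\nu)=g(\nabla|\xi|^2,\nu)$ is not the computation one actually gets, since the tangential trace of $\nabla(|\xi|^2\nu)$ picks up no normal-derivative term). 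You instead decompose $\xi=\xi^T+a\nu$ along $\partial M$, use the totally geodesic hypothesis to get $g(\varphi(\xi),\nu)=\xi^T(a)$ pointwise, and then integrate $\mathrm{div}_{\partial M}(a\xi^T)=(n-1)af+\xi^T(a)$ over the closed boundary, where $f|_{\partial M}=0$ finishes the job. This is a cleaner and more verifiable route to the same conclusion: it isolates exactly where the two boundary hypotheses enter (totally geodesic to discard the second-fundamental-form terms, $f|_{\partial M}=0$ to cancel the tangential divergence via \eqref{eqdiv}), at the modest cost of introducing the decomposition of $\xi$ along $\partial M$. Both proofs are otherwise the same formal divergence computation.
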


\begin{proof}
Using Equation \eqref{ric2}, we have
\begin{equation}
	Ric(\xi,\xi)=-(n-1)\xi(f)-\sum_{i=1}^n g(\xi,(\nabla \varphi)(e_i,e_i)),
\end{equation}and using Equation \eqref{eqconf2} and skew-symmetry of the associated operator $\varphi$, we find

\begin{equation}
	{\rm div} \varphi(\xi)=-|\varphi|^2-\sum_{i=1}^n g(\xi,(\nabla \varphi)(e_i,e_i)),
\end{equation}where $$|\varphi|^2=\sum_{i=1}^n g(\varphi\,e_i,\varphi\,e_i).$$

Hence, we get
\begin{equation}\label{eqricci3d}
Ric(\xi,\xi)=-(n-1)\xi(f)+{\rm div}(\varphi(\xi))+|\varphi|^2.
\end{equation}
Integrating (\ref{eqricci3d}) over $M,$ using the divergence theorem, the Lemma \ref{lemmadiv} and the hypothesis of $f$ over the boundary, we obtain
\begin{equation}
		\int_M Ric(\xi,\xi)dM-\int_M(|\varphi|^2+n(n-1)f^2)dM =\int_{\partial M}g(\varphi(\xi),\nu)d(\partial M).
	\end{equation}

Furthermore, since $\varphi$ is skew symmetric and $f=0$ in $\partial M,$ we have
\begin{equation}
g(\varphi(\xi),\nu)=
-\dfrac{1}{2}g(\nu,\nabla|\xi|^2)
\end{equation} 	

Hence, since $(M^n,\,g)$ be a smooth compact Riemannian manifold with smooth totally geodesic boundary $\partial M,$ we have ${\rm div}_{\partial M}(\nu)=0,$ and likewise, $${\rm div}_{\partial M}(|\xi|^2\nu)=g(\nabla|\xi|^2,\nu),$$ and so $\displaystyle\int_{\partial M}g(\nabla|\xi|^2,\nu)d(\partial M)=0,$ which proves the Lemma.
\end{proof}

For the sake of completeness we include here a proof of following Lemma, and from an equation contained in their proof, we get that $R>0$. (cf. Remark \ref{observation})

\begin{lemma}\label{lemmarici5}Let $(M^n,\,g)$ be a smooth compact Riemannian manifold with smooth totally geodesic boundary $\partial M$. Let $\xi$ be  a smooth conformal vector field on $M$ with conformal factor $f$ such that $f=0$ on $\partial M$ and $\Delta f=-n\alpha f$. Then,
	\begin{equation}\label{eqricci5}
		\int_M Ric(\nabla f,\xi)dM=-n(n-1)\alpha\int_Mf^2 dM.
	\end{equation}
\end{lemma}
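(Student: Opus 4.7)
My plan is to start from the general identity \eqref{ric1}, specialized at $Y=\nabla f$, which yields the pointwise formula
\begin{equation*}
Ric(\xi,\nabla f) = -(n-1)|\nabla f|^2 - g(\nabla f,\omega),
\end{equation*}
where I write $\omega := \sum_{i=1}^{n}(\nabla\varphi)(e_i,e_i)$. Integrating over $M$, the lemma then reduces to verifying two auxiliary identities: (i) $\int_M |\nabla f|^2\,dM = n\alpha\int_M f^2\,dM$, and (ii) $\int_M g(\nabla f,\omega)\,dM = 0$.

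For (i), applying the divergence theorem to the vector field $f\nabla f$ and invoking the boundary condition $f|_{\partial M}=0$ gives $\int_M|\nabla f|^2\,dM = -\int_M f\Delta f\,dM$, and the hypothesis $\Delta f = -n\alpha f$ converts the right-hand side into $n\alpha\int_M f^2\,dM$. Plugged back into the pointwise formula, this contributes exactly the $-n(n-1)\alpha\int_M f^2\,dM$ appearing in \eqref{eqricci5}. As a by-product, since $f$ is nontrivial, this same identity forces $\alpha>0$; recalling $R=n(n-1)\alpha$, one then recovers the positivity $R>0$ referenced in Remark \ref{observation}.

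For (ii), I would integrate by parts once more. In local components, $\omega^{j} = \nabla_i\varphi^{ji}$, so
\begin{equation*}
\int_M (\nabla_j f)(\nabla_i\varphi^{ji})\,dM = -\int_M (\nabla_i\nabla_j f)\varphi^{ji}\,dM + \int_{\partial M}(\nabla_j f)\varphi^{ji}\nu_i\,d(\partial M).
\end{equation*}
The bulk term vanishes pointwise because the Hessian $\nabla^2 f$ is symmetric while $\varphi$ is skew-symmetric, so their full contraction is zero. For the boundary integral, the condition $f|_{\partial M}=0$ forces $\nabla f$ to be proportional to the outward unit normal $\nu$ on $\partial M$; hence the integrand becomes a scalar multiple of $\varphi^{ji}\nu_j\nu_i$, which is $0$ again by skew-symmetry of $\varphi$. (Notably, the totally geodesic hypothesis on $\partial M$ plays no role in this particular step—only $f|_{\partial M}=0$ is needed.)

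Combining (i) and (ii) yields \eqref{eqricci5}. The whole argument is short; the only mild subtlety I anticipate is handling the boundary integral in (ii), but contracting $\varphi$ with two copies of $\nu$ disposes of it immediately.
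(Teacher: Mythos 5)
Your proof is correct, but it follows a genuinely different route from the paper's. The paper starts from the tensorial Bochner formula ${\rm div}(\nabla^2 f)=Ric(\nabla f)+\nabla(\Delta f)$, forms the vector field $\nabla^2 f(\xi)-(\Delta f)\,\xi$, and integrates its divergence; the resulting boundary integral $\int_{\partial M}\bigl(\nabla^2 f(\xi,\nu)-g(\xi,\nu)\Delta f\bigr)\,d(\partial M)$ is then killed by a Gauss--Weingarten computation that invokes the totally geodesic hypothesis and the first identity of Lemma \ref{lemmadiv}. You instead reuse the already-derived identity \eqref{ric1} with $Y=\nabla f$ and reduce everything to two integrations by parts: $\int_M|\nabla f|^2=n\alpha\int_M f^2$ (standard, from $f|_{\partial M}=0$ and $\Delta f=-n\alpha f$) and $\int_M g(\nabla f,\omega)=0$, the latter because the interior contraction $\langle\nabla^2 f,\varphi\rangle$ vanishes by symmetry versus skew-symmetry and the boundary term reduces to a multiple of $\varphi(\nu,\nu)=0$ once one notes that $f|_{\partial M}=0$ forces $\nabla f\parallel\nu$ along $\partial M$. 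Both arguments are sound; yours is shorter, avoids the somewhat delicate boundary manipulation with $\Delta_{\partial M}$ in the paper, and has the added value of showing that the totally geodesic assumption is not actually needed for this lemma (only $f|_{\partial M}=0$ is used), whereas the paper's proof as written leans on it. Your side remark that the identity $\int_M|\nabla f|^2=n\alpha\int_M f^2$ forces $\alpha>0$ requires $f\not\equiv 0$, which is not among the lemma's stated hypotheses, but this is a harmless aside consistent with the paper's Remark \ref{observation}.
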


\begin{proof}
	Using the  tensorial Ricci-Bochner formula
	\begin{equation}\label{bochT}
		{\rm div}(\nabla^2 f)=Ric(\nabla f)+\nabla (\Delta f)
	\end{equation}
	and the skew-symmetry of $\varphi$, we obtain
	\begin{eqnarray*}
		{\rm div}(\nabla^2 f(\xi))
		&=&g(\nabla (\Delta f),\xi)+Ric(\nabla f,\xi)+f\Delta f.
	\end{eqnarray*}
	On the other hand, since
	${\rm div}(\Delta f \xi)=g(\nabla (\Delta f),\xi)+nf\Delta f$, we have
	\begin{equation}\label{eqdivhes1}
		{\rm div}(\nabla^2 f(\xi)-\Delta f \xi)=-(n-1)f\Delta f +Ric(\nabla f,\xi).
	\end{equation}
	Integrating \eqref{eqdivhes1} over $M$ and using the divergence theorem yields
	\begin{equation}\label{eqdivhes}
		\int_{\partial M} (\nabla^2 f(\xi,\nu)- g(\xi,\nu)\Delta f)d(\partial M)=-(n-1)\int_Mf\Delta f dM+\int_M Ric(\nabla f,\xi)dM.
	\end{equation}
	Using the Gauss-Weingarten  equations and  the fact that $\partial M$ is totally geodesic, we get
	\begin{eqnarray*}
		\int_{\partial M} (\nabla^2 f(\xi,\nu)- g(\xi,\nu)\Delta f)d(\partial M) &=& \int_{\partial M} g(\nabla _{\partial M}f_\nu,\xi^T) d(\partial M)\\
		&&-\int_{\partial M}g(\xi,\nu)\Delta _{\partial M}f  d(\partial M).
	\end{eqnarray*}
	So we can use Lemma \ref{lemmadiv} to conclude that
	\begin{eqnarray*}
		\int_{\partial M} (\nabla^2 f(\xi,\nu)- g(\xi,\nu) \Delta f)d(\partial M)&=& -\int_{\partial M} f_\nu div_{\partial M}(\xi^T) d(\partial M)\\
		&&-\int_{\partial M} f \Delta (g(\xi,\nu) )d(\partial M)\\
		&=& -(n-1)\int_{\partial M} f_\nu f d(\partial M)\\
		&&-\int_{\partial M} f \Delta( g(\xi,\nu) )d(\partial M)\\
		&=&0.
	\end{eqnarray*}	
Thus, we have
	\begin{equation}\label{eqdivhes}
		(n-1)\int_Mf\Delta f dM=\int_M Ric(\nabla f,\xi)dM.
	\end{equation}
	Since $\Delta f=-n\alpha f$,   the proof of the lemma is finished. 	
\end{proof}

\begin{remark}\label{observation}
	Since $\dfrac12\Delta f^2=|\nabla f|^2+f\overline{\Delta} f$, $\Delta f^2=2\,{\rm div}(f\nabla f)$ and  $f|_{\partial M}=0$, we easily get
	$$\int_M |\nabla f|^2dM=-\int_Mf\Delta  f\,dM.$$
	
	Thus, by Lemma \ref{lemmarici} and  \eqref{eqdivhes}, we have  $$\int_M|\nabla f|^2dM=\frac{R}{n-1}\int_Mf^2 dM.$$ Therefore, $R > 0.$
\end{remark}

\section{ Main Results }\label{mr}

In this section, we will present the proof of Theorems \ref{theorem1} and \ref{theorem2}. 

\subsection*{Proof of Theorem \ref{theorem1}:}
\begin{proof}
	Suppose $\xi$ is a nontrivial conformal vector field with conformal factor $f$ satisfying $f|_{\partial M}=0$ that satisfies $\square \xi=\lambda \xi.$ Then using equations \eqref{ric2} and \eqref{rough laplace}, we have 
	\begin{equation}
	\square \xi=S \xi+\Delta \xi=-(n-2)\nabla f.
	\end{equation}
Therefore,
\begin{equation}\label{nabla f}
\nabla f =-\frac{\lambda}{n-2}\xi.
\end{equation}	

By equation (\ref{nabla f}) and Lemma \ref{lemmadiv} we have
\begin{equation}\label{eqint}
	-\frac{\lambda}{n-2}\int_M g(\xi,\xi) dM=\int_M g(\nabla f,\xi) dM=-n\int_M f^2 dM.
	\end{equation}
If $\lambda = 0,$ then the above equation  implies that  $f=0$ in $M,$ contradicting  our assumption that $\xi$ is a nontrivial
conformal vector field. Hence, the constant $\lambda > 0$. 

Now, taking covariant derivative in Equation (\ref{nabla f}) and using Equation (\ref{eqconf2}), we get
\begin{equation}
\nabla_X \nabla f=-\frac{\lambda}{n-2}(fX+\varphi (X)), \,\,\, X\in \mathfrak{X}(M).
\end{equation}
Taking the inner product with $ X\in \mathfrak{X}(M)$ in the above equation and noticing that $\varphi$ is skew symmetric, we conclude that
	\begin{equation}
g(\nabla_X \nabla f,X)=-\frac{\lambda f}{n-2}g(X,X), \,\,\, X\in \mathfrak{X}(M).
\end{equation}
 Using polarization in above expression, and noticing that	
	\begin{equation}
\nabla^2f(X,Y)= g(\nabla_X \nabla f,Y), \,\,\, X,Y\in \mathfrak{X}(M)
\end{equation} is symmetric, we get
	\begin{equation}
\nabla^2f(X,Y)=-\frac{\lambda f}{n-2}g(X,Y), \,\,\, X,Y\in \mathfrak{X}(M).
\end{equation}
	
Therefore, by the hypothesis on $f$, we can apply  Theorem B in \cite{Reilly2} to ensure that $M$ is isometric to a geodesic ball on $\mathbb{S}^{n}\left(\frac{\lambda}{n-2}\right)$. Since  $\partial M$ is totally geodesic, we conclude that  $M$ is isometric to a  hemisphere of $\mathbb{S}^{n}\left(\frac{\lambda}{n-2}\right)$.

Conversely, if $(M,\,g)$ is isometric to the a hemisphere of  $\mathbb{S}^{n}\left(\frac{\lambda}{n-2}\right)$ then Equation \eqref{eqquadradou} confirms the existence of nontrivial vector field $\textbf{u}$ satisfying $\square \textbf{u} = \lambda \textbf{u}$ for a positive constant $\lambda$.
\end{proof}

Finally, we present the proof of our second result.

\subsection*{Proof of Theorem \ref{theorem2}:}

\begin{proof}
Since
$$Ric(\nabla f+\alpha \xi,\nabla f+\alpha \xi)=Ric(\nabla f,\nabla f)+2\alpha Ric(\nabla f,\xi)+\alpha^2Ric(\xi,\xi),$$
integrating the above expression and using the Equations \eqref{eqricci2} and \eqref{eqricci3} and the Lemma \ref{lemmarici5}, we conclude
\begin{eqnarray}
\int_M Ric(\nabla f+\alpha \xi,\nabla f+\alpha \xi)dM=\int_M((\Delta f)^2-|\nabla^2 f|^2-n(n-1)\alpha^2f^2+\alpha^2|\varphi|^2)dM. \nonumber \\
\end{eqnarray}
Using $\Delta f=-n\alpha f,$ we have
\begin{eqnarray}\label{eqthe2p}
\int_M \big(Ric(\nabla f+\alpha \xi,\nabla f+\alpha \xi)-\alpha^2|\varphi|^2\big)dM=\int_M\left(\frac{1}{n}(\Delta f)^2-|\nabla ^2f|^2\right)dM.  \nonumber \\
\end{eqnarray}
Now, using Schwartz’s inequality  in equation \eqref{eqthe2p}, we get
	\begin{equation*}
		\int_{M}Ric(\nabla f+\alpha \xi,\nabla f+\alpha \xi)dM\leq \alpha^2\int_{M}|\varphi|^2dM,
	\end{equation*} which is precisely the inequality (\ref{eqtheorem2}). Moreover, equality holds if and only if
$$|\nabla ^2f|^2=\frac{1}{n}(\Delta f)^2,$$	
which happens if and only if
$$\nabla ^2f=\frac{1}{n}(\Delta f)g.$$	
Since  $\Delta f=-n\alpha f,$ we obtain
$$\nabla^2f=-\alpha f g.$$

Using the Remark~\ref{observation}, we have $\alpha>0.$ Therefore, by the hypothesis on $f$ ($f=0$ in $\partial M$) and for $\nabla^2f=-\alpha f g$ we can apply  Theorem B in \cite{Reilly2} to ensure that $M$ is isometric to a geodesic ball on $\mathbb{S}^n(\alpha)$. Since  $\partial M$ is totally geodesic, we conclude that  $M$ is isometric to a  hemisphere of $\mathbb{S}^n(\alpha)$. 	
\end{proof}

\subsection*{Acknowledgements} The first named author was partially supported by a grant from CNPq-Brazil. The authors would like to thank Professor  Abd\^enago Barros for fruitful conversations about the results.


\normalsize


\begin{thebibliography}{HD82}


\baselineskip=17pt



\bibitem{abb} Abbassi, M.T.K., Amri, N. and Bejan, CL. \emph{Conformal Vector Fields and Ricci Soliton Structures on Natural Riemann Extensions}. Mediterr. J. Math. \textbf{18}, 55 (2021).


\bibitem{besse} Besse,A. L.: Einstein manifolds, Springer, Berlin, 1987.

\bibitem{Bou} Boucher, W., Gibbons, G. and Horowitz, G.  \emph{Uniqueness theorem for anti-de Sitter spacetime}. Phys. Rev. D  \textbf{30} (3) (1984), 2447-2451.

\bibitem{Bourguignon} Bourguignon, J.P.:\emph{ Une stratification de l’espace des structures Riemanniennes}, Compos. Math.   \textbf{30} (1975), 1-41.

\bibitem{caminha} Caminha, A.: \emph{The geometry of closed conformal vector fields on Riemannian spaces.}, Bulletin of the Brazilian Mathematical Society, New Series {\bf 42} (2) (2011): 277-300.

\bibitem{chow} Chow, B; Lu, P.; Ni, L.: Hamilton’s Ricci flow, In Graduate Studies in Mathematics; Science Press, New York, 2006.

\bibitem{CM} Corvino, J., Eichmair, M. and Miao, P. \emph{Deformation of scalar curvature and volume}. Math. Annalen.  \textbf{357} (2013), 551-584.
	
\bibitem{Deshmukh} Deshmukh, S.:\emph{ Characterizing spheres by conformal vector fields}, Ann. Univ. Ferrara  \textbf{ 56} (2010), 231-236.
 
\bibitem{deshmuk2} Deshmukh, S.: \emph{Jacobi-type vector fields on Ricci solitons}, Bull. Math. Soc. Sci. Math. Roumanie.  \textbf{55}  (2012), 41-50.

\bibitem{deshmuk} Deshmukh, S., Al-Solamy, S. F.: \emph{Conformal gradient vector fields on a compact Riemannian manifold}, Colloquium Math.  \textbf{112} (1) (2008), 157-161.

\bibitem{deshmuk1} Deshmukh, S., Ishan, A., Vilcu, G..: \emph{Conformal Vector Fields and the De-Rham Laplacian on a Riemannian Manifold}, Mathematics.  \textbf{9} (8) (2021), 863.

\bibitem{duggal} Duggal, K.L.; Sharma R.: \emph{Symmetries of Spacetimes and Riemannian Manifolds}, Kluwer Academic Publishers: Dordrecht, The Netherlands. (1999).

\bibitem{evviana} Evangelista, I.; Viana, E. \emph{Conformal gradient vector fields on Riemannian manifolds with boundary}. Colloq. Math. 2020, \textbf{159}, 231–241.

\bibitem{Ejiri} Ejiri, N.: \emph{A negative answer to a conjecture of conformal transformations of Riemannian manifolds}, J. Math. Soc. Japan   \textbf{33} (1981), 261-266 .

\bibitem{Fischer} Fischer, A.E.; Marsden, J.E.: \emph{Manifolds of Riemannian metrics with prescribed scalar curvature},   Bull. Amer. Math. Soc. {\bf 80} (1974), 479-484.

\bibitem{Garcia} García-Río, E.; Kupeli, D.N.; Ünal, B.: \emph{Some conditions for Riemannian manifolds to be isometric with Euclidean spheres},   J. Differ. Equ. {\bf 194} (2003), 287-299.

\bibitem{Goldberg1} Goldberg, S. I., Kobayashi, S.: \emph{The conformal transformation group of a compact Riemannian manifold},   Amer. J. Math. {\bf 84} (1962), 170-174.

\bibitem{Goldberg2} Goldberg, S. I., Kobayashi, S.: \emph{The conformal transformation group of a compact homogeneous Riemannian manifold}, Bull. Amer. Math. Soc. {\bf68} (1962), 378-381.


\bibitem{hall} Hall, G.S. \emph{Conformal Vector Fields and Conformal–Type Collineations in Space-Times}. General Relativity and Gravitation \textbf{32}, 933–941 (2000). https://doi.org/10.1023/A:1001941209388

\bibitem{hall47}Hall G.S.; Steele J.D. \emph{Conformal vector fields in general relativity.} J. Math. Phys. 1991, \textbf{32}, 1847.

\bibitem {Hsiung} Hsiung, C. C.: \emph{On the group of conformal transformations of a compact Riemannian manifold}, J. Differential Geom. {\bf2} (1968), 185-190 .

\bibitem{hicks} Hicks, N.: \emph{Closed vector fields.} Pacific Journal of Mathematics {\bf 15} (1) (1965): 141-151.

\bibitem{deshmuk3} Ishan, A., Deshmukh, S., Vîlcu, G.-E.: \emph{Conformal Vector Fields and the De-Rham Laplacian on a Riemannian Manifold}, Mathematics.  \textbf{863.} (9) (2021).

\bibitem{Kobayashi} Kobayashi, O.: \emph{A differential equation arising from scalar curvature}, J. Math. Soc. Japan. {\bf 34} (1982), 665-675.

\bibitem{Lafontaine} Lafontaine, J.: \emph{Sur la géométrie d’une généralisation de l’équation différentielle d’Obata.},J. Math. Pures Appl. {\bf 62} (1983), 63-72.

\bibitem{Nagano} Nagano, T.: \emph{The conformal transformation on a space with parallel Ricci tensor}, J. Math. Soc. Japan. {\bf 11} (1959), 10-14.

\bibitem{naya}  Nagano, T.; Yano, K.: \emph{Einstein spaces admitting a one-parameter group of conformal transformations}, Ann Math {\bf 69} (1959), 451-461

\bibitem{obata} Obata, M.: \emph{Certains conditions for a Riemannian manifold to be isometric with a sphere}, J. Math. Soc. Japan {\bf14} (1962), 333-340.

\bibitem{Obata1} Obata, M.: \emph{The conjectures on conformal transformations of Riemannian manifolds}, Bull. Amer. Math. Soc. {\bf 77} (1971), 265-270.

\bibitem{reilly} Reilly, R. C.: \emph{Applications of the Hessian operator in a Riemannian manifold},  Indiana Univ. Math. J. {\bf26} (1977), 459-472.

\bibitem{Reilly2}  Reilly, R. C.: \emph{Geometric applications of the solvability of Neumann problems on a Riemannian manifold}, Arch. Ration. Mech. Anal. {\bf75} (1) (1980), 23-29.

\bibitem{tanno} Tanno, S.; Weber, W. C.: \emph{Closed conformal vector fields}. Journal of Differential Geometry,  {\bf 3} (3-4) (1969),  361-366.

\bibitem{ny} Yano, K., Nagano, T.: \emph{ Einstein spaces admitting a one-parameter group of conformal transformations}, Ann. of Math. {\bf69} (1959), 451-461.

\bibitem{Yano-Book} Yano, K.: Integral formulas in Riemannian geometry, New York, 1970.


\end{thebibliography}
\end{document}